\newtheorem{theorem}{Theorem}
\newtheorem{lemma}{Lemma}
\newcommand{\seqnum}[1]{\href{https://oeis.org/#1}{\rm \underline{#1}}}
\begin{document}

\baselineskip=17pt

\title{\bf On $\boldsymbol{\psi}$-amicable numbers and their generalizations}

\author{\bf S. I. Dimitrov}

\date{2025}

\maketitle
\begin{abstract}
In this article, we study the properties of $\psi$-amicable numbers. We prove that their asymptotic density relative to the positive integers is zero. 
We also propose generalizations of $\psi$-amicable numbers.\\
\quad\\
\textbf{Keywords}:  Dedekind $\psi(n)$ function,  $\psi$-amicable numbers.\\
\quad\\
{\bf  2020 Math.\ Subject Classification}: 11A25 $\cdot$ 11D72 
\end{abstract}

\section{Notations, definitions and formulas}
\indent

The letter $p$, with or without a subscript, will always denote prime number.
Let $n>1$ be positive integer with prime factorization
\begin{equation*}
n=p^{a_1}_1\cdots p^{a_r}_r\,.
\end{equation*}
We define the Dedekind function $\psi(n)$ by the formula
\begin{equation}\label{psiform1}
\psi(n)=n \prod\limits_{i=1}^{r}\left(1+\frac{1}{p_i}\right) \quad \mbox{ and } \quad \psi(1)=1\,.
\end{equation}
Recall that
\begin{equation}\label{psiform2}
\psi(n)=\sum _{{d\,|\,n}}\frac{n\mu^2(d)}{d}\,,
\end{equation}
where $\mu(n)$ is the M\"{o}bius function. The proof of \eqref{psiform2} can be found in \cite{Penney}.
We shall use the convention that a congruence, $m\equiv n\,\pmod {d}$ will be written as $m\equiv n\,(d)$. 
A positive integer $n$ is said to be $\psi$-abundant if $\psi(n)>2n$.
A primitive $\psi$-abundant number is defined as an $\psi$-abundant number none of whose proper divisors is $\psi$-abundant.
Thus every $\psi$-abundant number is a multiple of a primitive $\psi$-abundant numbers.
Throughout this paper we denote $\nu=\log\log n$ and $s_\psi(n)=\psi(n)-n$. 

\section{Introduction and statement of the results}
\indent

Two different natural numbers $a$ and $b$ are said to be $\psi$-amicable if
\begin{equation}\label{psi-amicable}
\psi(a)=\psi(b)=a+b\,.
\end{equation}
In 2019, Amiram Eldar contributed sequences \seqnum{A323329} and \seqnum{A323330} to the OEIS \cite{Sloane}, listing the smaller and larger members, respectively, of the $\psi$-amicable pairs.
The smallest $\psi$-amicable pair is (1330, 1550).
Apparently, this definition is analogous to the classical definition of amicable pairs, which uses the sum-of-divisors function $\sigma$.
In Section \ref{ktuples}, we introduce the notion of $\psi$-amicable $k$-tuples.
In Section \ref{Aktuples}, we provide another definition of the same concept.
Our main result concerns the density of $\psi$-amicable pairs. We prove that their asymptotic density is zero.
\begin{theorem}\label{Theorem}
Let $M(n)$ denote the number of $\psi$-amicable pairs $(a, b)$ with $a<b$ and $a\leq n$. Then $M(n)=o(n)$ as $n\rightarrow \infty$.
\end{theorem}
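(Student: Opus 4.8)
I would begin with the trivial reduction: since $a<b$ and $\psi(a)=a+b$, the larger member is $b=s_\psi(a)$, so $b$ is a function of $a$ and
\[
M(n)=\#\{a\le n:\ s_\psi(a)>a\ \text{ and }\ \psi\big(s_\psi(a)\big)=\psi(a)\};
\]
every such $a$ is $\psi$-abundant, $\psi(a)/a=1+b/a>2$. As the $\psi$-abundant numbers have positive density this is not yet enough, so the plan is to use the amicability condition $\psi(a)=\psi(b)$ to squeeze $\psi(a)/a$ into $(2,2+\varepsilon)$ for all but $o(n)$ of the relevant $a$, and then to invoke the distribution theory of $\psi(n)/n$. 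Fix $\varepsilon\in(0,\tfrac12)$, put $z=z(n)=\lfloor\nu^{1/2}\rfloor\to\infty$, and for an integer $m$ write $f_z(m)=\prod_{p\mid m,\,p\le z}(1+1/p)$, so that $\psi(m)/m=f_z(m)\prod_{p\mid m,\,p>z}(1+1/p)$ splits according to the primes $\le z$ dividing $m$; this clean splitting, which holds because $\psi(m)/m$ depends only on $\mathrm{rad}(m)$, is what makes $\psi$ more tractable than $\sigma$ here.

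Next I would isolate the normal behaviour of $a$: for all but $o_\varepsilon(n)$ integers $a\le n$, (i) $\sum_{p\mid a,\,p>z}1/p<\varepsilon$ and (ii) $q\mid\psi(a)$ for every prime $q\le z$. For (i) the number of exceptions is $\le\varepsilon^{-1}\sum_{a\le n}\sum_{p\mid a,\,p>z}1/p\ll\varepsilon^{-1}n/(z\log z)=o(n)$. For (ii): if $q\nmid\psi(a)$ then $a$ has no prime factor $\equiv-1\ (q)$ — such a factor $p$ would give $q\mid p+1\mid\psi(a)$ — and the number of $a\le n$ with no prime factor in the residue class $-1\ (q)$ is $\ll n(\log n)^{-1/\phi(q)}$, up to a factor increasing slowly in $q$, by standard sieve/Selberg–Delange estimates; summing over the primes $q\le z=\lfloor\nu^{1/2}\rfloor$ leaves a bound $\ll n\,\nu^{O(1)}e^{-\nu^{1/2}}=o(n)$.

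Now let $a\le n$ be non-exceptional and a member of a $\psi$-amicable pair with partner $b=s_\psi(a)>a$, so $\psi(b)=a+b=\psi(a)$. By (ii), for each prime $q\le z$ one has $s_\psi(a)=\psi(a)-a\equiv-a\ (q)$, hence $q\mid s_\psi(a)\iff q\mid a$; so $a$ and $b$ have exactly the same prime divisors $\le z$, and $f_z(b)=f_z(a)$. Put $u=\psi(a)/a>2$. From $\psi(b)=\psi(a)$ and $b=(u-1)a$ we get $\psi(b)/b=u/(u-1)$, while also $\psi(a)/a=f_z(a)g'$ and $\psi(b)/b=f_z(b)g=f_z(a)g$, with $g'=\prod_{p\mid a,\,p>z}(1+1/p)\in[1,e^\varepsilon)$ by (i) and $g=\prod_{p\mid b,\,p>z}(1+1/p)\ge1$. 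Dividing, $1/(u-1)=g/g'$, i.e.\ $u-1=g'/g\le g'<e^\varepsilon$; thus $2<\psi(a)/a<1+e^\varepsilon$ for every non-exceptional $a$ in a $\psi$-amicable pair. Hence
\[
M(n)\le o_\varepsilon(n)+\#\{a\le n:\ 2<\psi(a)/a<1+e^\varepsilon\}.
\]
Since $\psi(n)/n$ has a continuous limiting distribution function $D$ — an Erdős–Wintner type fact, valid because $\log(\psi(n)/n)=\sum_{p\mid n}\log(1+1/p)$ with $\sum_p1/p=\infty$, and one checks directly that $\{n:\psi(n)/n=2\}=\{2^j3^k:j,k\ge1\}$ is thin — the last count is $(D(1+e^\varepsilon)-D(2)+o_\varepsilon(1))n$ with $D(1+e^\varepsilon)-D(2)\to0$ as $\varepsilon\to0$. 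Letting $n\to\infty$ and then $\varepsilon\to0$ yields $M(n)=o(n)$.

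The crucial point, and where the amicability hypothesis is really used, is the observation that for almost all $a$ the integer $s_\psi(a)$ has the same small prime factors as $a$; together with $\psi(a)=\psi(b)$ this pins $\psi(a)/a$ to a neighbourhood of $2$. I expect the main technical obstacle to be step (ii): one needs the count of $a\le n$ with no prime factor $\equiv-1\ (q)$ to be controlled uniformly enough in $q$ to permit summing over all primes $q\le z$ with $z\to\infty$, which forces $z$ to grow slowly (the choice $z=\lfloor\nu^{1/2}\rfloor$ is comfortably sufficient). The continuity of the distribution of $\psi(n)/n$ is the other external input but may be cited; note that the squeeze uses only the crude bound $\psi(a)/a<1+e^\varepsilon$, so nothing delicate about the concentration of $\psi(n)/n$ near $2$ is needed beyond that continuity.
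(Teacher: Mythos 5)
Your argument is correct in substance, but it reaches the conclusion by a genuinely different route from the paper. The first half is the same squeeze as in the paper (which follows Erd\H{o}s 1955): the paper fixes a large constant $A$, shows via its Lemma \ref{psinot} that $\psi(a)\equiv 0\ \big(\big(\prod_{p\le A}p\big)^A\big)$ for almost all $a$, deduces that every divisor $d\le A$ of $a$ also divides $b=\psi(a)-a$, and compares $\psi(b)/b$ with the truncated sum $\psi_A(a)/a$ of Lemma \ref{psiA}; you instead let the threshold $z$ grow with $n$, use $q\mid\psi(a)$ for all primes $q\le z$ to conclude that $a$ and $b$ have the same prime factors below $z$, and exploit the fact that $\psi(m)/m$ depends only on the radical of $m$. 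The price of your version is a sieve bound for the number of $a\le n$ with no prime factor $\equiv-1\ (q)$ that is \emph{uniform} in $q\le z\to\infty$ (you correctly flag this; it does go through, e.g.\ via the fundamental lemma of sieve theory plus Mertens for progressions, giving $\ll n\,e^{-\nu/q}$ per prime $q$), whereas the paper only ever needs the fixed-modulus statement, which follows from the divergence of $\sum 1/q_i$ over $q_i\equiv-1\ (p)$. The real divergence is the endgame: from $2<\psi(a_i)/a_i<2+\eta$ the paper counts via Erd\H{o}s' 1934 primitive-abundant device --- each such $a_i$ has a primitive $\psi$-abundant divisor satisfying the conditions of Lemma \ref{Erdos1934}, hence a prime factor $p_i\in\big((\log n)^{10},\,n^{1/(40\nu)}\big)$ by Lemma \ref{primefact}, and the cofactors $c_i=a_i/p_i$ are shown to be pairwise distinct, forcing the count below $n/(\log n)^{10}$ plus exceptional sets --- whereas you invoke the Erd\H{o}s--Wintner theorem together with Erd\H{o}s' continuity criterion (continuity of the limiting distribution $D$ of $\psi(n)/n$ holds because $\sum_p 1/p$ diverges and $\log(1+1/p)\neq 0$ for every $p$), so the count is $\big(D(1+e^\varepsilon)-D(2)\big)n+o(n)$, which is $o(n)$ after letting $\varepsilon\to 0$. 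Your route is shorter and conceptually cleaner but leans on two citable, nonelementary external inputs; the paper's is longer but self-contained apart from two fixed lemmas of Erd\H{o}s. Both are valid proofs of the theorem.
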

Our approach is based on the method of Erdős' \cite{Erdos1955}. We essentially reproduce his argument, adapting it to Dedekind's $\psi$-function, with only minor technical modifications.

\section{Lemmas}
\indent

\begin{lemma}\label{Erdos1955}
Let $q_i$ be a sequence of prime numbers satisfying
\begin{equation*}
\sum_{i=1}^{\infty} \frac{1}{q_i} = \infty\,.
\end{equation*}
Denote by $v_q(n)$ the number of $q_i$ dividing $n$. Then the density of integers $n$ with $v_q(n) < A$ is $0$ for every $A$.
\end{lemma}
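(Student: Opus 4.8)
The plan is to use the second-moment (Turán–Chebyshev) method applied to a truncation of the counting function. Fix $A$ and let $\varepsilon>0$. Since $\sum_i 1/q_i$ diverges, I would choose $N$ so large that $S_N:=\sum_{i=1}^N 1/q_i>\max(2A,\,4/\varepsilon)$. For $n\le x$ set $f(n)=\#\{i\le N:\ q_i\mid n\}$; because the $q_i$ are distinct primes we have $v_q(n)\ge f(n)$, so it suffices to bound the number of $n\le x$ with $f(n)<A$.

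First I would compute the mean, $\sum_{n\le x} f(n)=\sum_{i\le N}\lfloor x/q_i\rfloor = xS_N+O(N)$, and then the second moment by separating diagonal and off-diagonal contributions:
\[
\sum_{n\le x} f(n)^2=\sum_{i\le N}\lfloor x/q_i\rfloor+\sum_{i\ne j}\lfloor x/(q_iq_j)\rfloor \le xS_N+xS_N^2+O(N^2).
\]
Combining the two estimates and using $S_N\le N$ to absorb the lower-order terms, the variance satisfies
\[
\sum_{n\le x}\bigl(f(n)-S_N\bigr)^2 \le xS_N+O(N^2).
\]
Since $S_N>2A$, the inequality $f(n)<A$ forces $|f(n)-S_N|>S_N/2$, so Chebyshev's inequality gives
\[
\#\{n\le x:\ v_q(n)<A\}\le \#\{n\le x:\ f(n)<A\}\le \frac{4}{S_N^{2}}\sum_{n\le x}\bigl(f(n)-S_N\bigr)^2\le \frac{4x}{S_N}+O\!\left(\frac{N^2}{S_N^{2}}\right).
\]

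Dividing by $x$ and letting $x\to\infty$ with $N$ fixed, the error term disappears and the upper density of $\{n:\ v_q(n)<A\}$ is at most $4/S_N<\varepsilon$. Since $\varepsilon$ was arbitrary, this density is $0$, which proves the lemma. The only delicate point is the order of the two limiting operations: one must hold the truncation level $N$ fixed while sending $x\to\infty$, and only afterwards let $N\to\infty$ (equivalently, pick $N$ according to $\varepsilon$ before $x$ grows). Keeping the $O(N^2)$ term from the second-moment computation under control is harmless precisely for this reason, so there is no genuine obstacle beyond this bookkeeping.
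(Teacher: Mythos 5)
Your argument is correct and complete; note, however, that the paper does not actually prove this lemma at all --- it simply cites Lemma~1 of Erd\H{o}s (1955), so any self-contained proof is already ``different'' from what the paper offers. Your route is the Tur\'an second-moment method: truncate to the first $N$ primes with $S_N=\sum_{i\le N}1/q_i>\max(2A,4/\varepsilon)$, bound the variance of $f(n)=\#\{i\le N: q_i\mid n\}$ by $xS_N+O(N^2)$, and apply Chebyshev; the computations (mean $xS_N+O(N)$, off-diagonal second moment at most $xS_N^2$, and the crucial order of limits, $x\to\infty$ before $N\to\infty$) all check out. Erd\H{o}s's own argument is typically more elementary and avoids second moments: one splits $\{q_i\}$ into $A$ disjoint subsequences, each still having divergent reciprocal sum (possible since the tails of a divergent series diverge), and observes via a sieve/inclusion--exclusion bound that the integers missing every prime of a fixed subsequence $\{q_{i_1},\dots,q_{i_k},\dots\}$ have upper density at most $\prod_{j\le k}(1-1/q_{i_j})\to 0$; outside a density-zero exceptional set, $n$ then picks up at least one prime from each of the $A$ subsequences. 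The sieve route is shorter and needs only first-moment information, while your variance argument is more quantitative (it gives an explicit $4/S_N$ density bound and in fact shows $f(n)$ concentrates near $S_N$, far more than the statement requires). Either is acceptable; yours is a legitimate, fully rigorous replacement for the missing proof.
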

\begin{proof}
See (\cite{Erdos1955}, Lemma 1).
\end{proof}

\begin{lemma}\label{Erdos1934}
The number of integers $m \leq n$ which do not satisfy all of the following three conditions:
\[
\begin{aligned}
(1)\; & \text{if } p^{a} \mid m \text{ and } a > 1, \text{ then } p^{a} < (\log n)^{10}\,; \\
(2)\; & \text{the number of distinct prime factors of } m \text{ is less than } 10\nu\,; \\
(3)\; & \text{the greatest prime factor of } m \text{ is greater than } n^{1/(20\nu)}\,;
\end{aligned}
\]
is $o\!\left(\frac{n}{\log^{2} n}\right)$.
\end{lemma}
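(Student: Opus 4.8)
The plan is to treat the three conditions one at a time. Since conditions $(1)$--$(3)$ make no reference to $\psi$, this is a classical sieve estimate (in essence the lemma used for the same purpose in the work of Erdős), and the quickest route is to cite it; for completeness I would instead argue directly. For $i=1,2,3$ let $E_i(n)$ denote the set of integers $m\le n$ violating condition $(i)$. Since the set counted in the lemma is contained in $E_1(n)\cup E_2(n)\cup E_3(n)$, it suffices to prove $|E_i(n)|=o(n/\log^{2}n)$ for each $i$ and add the three bounds. The generous constants $10,10,20$ built into the conditions will in fact make each $|E_i(n)|$ of size $O\!\bigl(n(\log n)^{-c}\bigr)$ for a fixed $c>2$.

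\emph{Conditions $(1)$ and $(2)$.} Every $m\in E_1(n)$ is divisible by a prime power $q=p^{a}$ with $a\ge 2$ and $q\ge(\log n)^{10}$, so
\[
|E_1(n)|\ \le\ \sum_{\substack{q=p^{a},\ a\ge 2\\ q\ge(\log n)^{10}}}\Big\lfloor\frac{n}{q}\Big\rfloor\ \le\ n\sum_{a\ge 2}\ \sum_{p\ge(\log n)^{10/a}}\frac{1}{p^{a}}\,.
\]
For each $a\ge 2$ the inner sum is $\ll(\log n)^{-10(a-1)/a}\le(\log n)^{-5}$, and only the $O(\log n)$ exponents $a\le\log_{2}n$ contribute, so $|E_1(n)|\ll n(\log n)^{-4}$. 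For $E_2(n)$ I would use Rankin's trick: for every fixed $K>1$,
\[
|E_2(n)|=\#\{m\le n:\ \omega(m)\ge 10\nu\}\ \le\ K^{-10\nu}\sum_{m\le n}K^{\omega(m)}\,.
\]
Taking $K=2$ and using $2^{\omega(m)}=\sum_{d\mid m}\mu^{2}(d)$, whence $\sum_{m\le n}2^{\omega(m)}\le n\sum_{d\le n}d^{-1}\ll n\log n$, gives $|E_2(n)|\ll n(\log n)^{1-10\log 2}\ll n(\log n)^{-5}$.

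\emph{Condition $(3)$.} Here $|E_3(n)|$ is the number of $y$-smooth integers $m\le n$ with $y=n^{1/(20\nu)}$. By Rankin's trick with exponent $1-\alpha$, $\alpha=1/\log y$, combined with Mertens' theorem and the inequality $-\log(1-t)\le 2t$ ($0\le t\le\tfrac12$),
\[
|E_3(n)|\ \le\ n^{1-\alpha}\prod_{p\le y}\bigl(1-p^{\alpha-1}\bigr)^{-1}\ \le\ n\exp\!\bigl(-u+2e\log\log y+O(1)\bigr),
\]
where $u=\log n/\log y=20\nu$. Since $\log\log y\le\nu$, the exponent is at most $-(20-2e)\nu+O(1)\le-14\nu$ for large $n$, so $|E_3(n)|\ll n(\log n)^{-14}$. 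Alternatively one may simply quote the classical smooth-number bound $\Psi(x,x^{1/u})\le x\,u^{-u(1+o(1))}$, valid for $u=20\log\log n$, which likewise beats every fixed power of $\log x$. Adding the bounds for $|E_1(n)|$, $|E_2(n)|$, $|E_3(n)|$ proves the lemma.

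The part I expect to need the most care is condition $(3)$: one has to choose the Rankin exponent $\alpha$ so as to balance $n^{-\alpha}=e^{-u}$ against the Euler product $\prod_{p\le y}(1-p^{\alpha-1})^{-1}$, and then verify that the net saving $(20-2e)\nu$ genuinely dominates $2\log\log n=\log(\log^{2}n)$ --- which is precisely why the exponent in condition $(3)$ is taken to be $20$ and not something smaller. Conditions $(1)$ and $(2)$ are routine, resting only on the convergence of $\sum_{p}p^{-a}$ for $a\ge 2$ and on the elementary estimate $\sum_{m\le n}2^{\omega(m)}\ll n\log n$.
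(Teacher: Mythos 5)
Your proof is correct, but it takes a different route from the paper: the paper does not prove this lemma at all, it simply cites Lemma 1 of Erd\H{o}s's 1934 paper on the density of abundant numbers, whereas you reconstruct a self-contained argument. Your decomposition into the three exceptional sets with a union bound is the natural one, and each estimate checks out: the prime-power sum for condition (1) gives $O\bigl(n(\log n)^{-4}\bigr)$ after summing the $O(\log n)$ admissible exponents $a$; the Rankin/Chernoff bound $2^{-10\nu}\sum_{m\le n}2^{\omega(m)}\ll n(\log n)^{1-10\log 2}$ for condition (2) gives an exponent $1-10\log 2<-5$; and for condition (3) the Rankin bound with $\alpha=1/\log y$ yields $n\exp\bigl(-(20-2e)\nu+O(1)\bigr)\ll n(\log n)^{-14}$, and you correctly identify that the whole point of the constant $20$ in the statement is that the net saving must beat the $2\log\log n$ in the target bound $o(n/\log^2 n)$. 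What the citation buys the paper is brevity and fidelity to Erd\H{o}s's original (fully elementary, pre-Rankin) argument; what your version buys is a complete, verifiable proof with explicit power-of-$\log$ savings, at the cost of invoking Mertens' theorem and the standard smooth-number machinery. Either is acceptable; yours would make the paper self-contained.
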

\begin{proof}
See (\cite{Erdos1934}, Lemma 1).
\end{proof}

\begin{lemma}\label{psinot}
Let $A$ be any constant. Then the density of integers $n$ for which
\begin{equation*}
\psi(n) \not\equiv 0 \, \Bigg(\bigg(\prod_{p\leq A}p\bigg)^A\Bigg)
\end{equation*}
is $0$ for every $A$.
\end{lemma}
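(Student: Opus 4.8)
The plan is to show that for almost all $n$ the number $\psi(n)$ is divisible, separately for \emph{every} prime $q\le A$, by $q^A$; since these prime powers are pairwise coprime, their product $\big(\prod_{p\le A}p\big)^A=\prod_{q\le A}q^A$ will then divide $\psi(n)$ for almost all $n$, which is exactly the assertion of the lemma. This is again in the spirit of Erdős, the key tool being Lemma~\ref{Erdos1955}.

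The starting point is the factored form of $\psi$ coming from \eqref{psiform1}: if $n=p_1^{a_1}\cdots p_r^{a_r}$, then $\psi(n)=\prod_{i=1}^{r}p_i^{a_i-1}(p_i+1)$. Consequently, whenever $\ell_1,\dots,\ell_A$ are distinct prime divisors of $n$, the product $(\ell_1+1)\cdots(\ell_A+1)$ divides $\psi(n)$. Now fix a prime $q\le A$ and let $P_q$ be the set of primes $p$ with $p\equiv-1\ (q)$ (for $q=2$ this is simply the set of all odd primes). By Dirichlet's theorem together with Mertens' estimate for primes in an arithmetic progression one has $\sum_{p\in P_q}1/p=\infty$, so Lemma~\ref{Erdos1955}, applied to the sequence $P_q$, shows that the set $E_q$ of integers $n$ having fewer than $A$ prime divisors in $P_q$ has density $0$.

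To conclude, I would put $E=\bigcup_{q\le A}E_q$, a finite union of density-zero sets and hence itself of density $0$. If $n\notin E$, then for each prime $q\le A$ we may select distinct primes $\ell_1,\dots,\ell_A\in P_q$ dividing $n$; since $q\mid \ell_j+1$ for every $j$, we obtain $q^A\mid(\ell_1+1)\cdots(\ell_A+1)\mid\psi(n)$. Letting $q$ range over all primes $\le A$ and using coprimality, these conditions combine to give $\big(\prod_{p\le A}p\big)^A\mid\psi(n)$. Thus every $n$ violating the congruence of the lemma lies in $E$, so the set of such $n$ has density $0$.

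I do not expect a serious obstacle here. The one input that genuinely has to be verified — and it is classical — is the divergence $\sum_{p\equiv-1\ (q)}1/p=\infty$, which is precisely what makes Lemma~\ref{Erdos1955} applicable to $P_q$; everything else is routine bookkeeping, namely that distinct prime divisors of $n$ contribute multiplicatively independent factors $p+1$ to $\psi(n)$, and that a finite union of null sets is null.
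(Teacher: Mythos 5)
Your argument is correct and coincides with the paper's own proof: both reduce to a single prime $q\le A$, apply Lemma~\ref{Erdos1955} to the primes $\equiv-1\ (q)$ (whose reciprocal sum diverges), and note that $A$ such prime divisors of $n$ force $q^A\mid\psi(n)$ via the factors $p+1$ in \eqref{psiform1}, finishing with a finite union of null sets. No substantive difference.
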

\begin{proof}
It suffices to show that the density of integers $n$ for which there exists a prime $p \le A$ such that $\psi(n) \not\equiv 0 \, \big(p^A\big)$ is $0$. 
Let $q_1, q_2, \dots$ be primes satisfying $q_i \equiv -1 \, (p)$. It is well known that
\begin{equation*}
\sum_{i=1}^{\infty} \frac{1}{q_i} = \infty.
\end{equation*}
Hence, by Lemma \ref{Erdos1955}, the density of integers divisible by fewer than $A$ of the $q_i$ is $0$.
If $n$ is divisible by at least $A$ of the $q_i$, then \eqref{psiform1} gives us $\psi(n) \equiv 0 \, \big(p^A\big)$. 
Therefore the density of the integers with $\psi(n) \not\equiv 0 \, \big(p^A\big)$ is $0$.
\end{proof}

\begin{lemma}\label{psiA}
Denote
\begin{equation}\label{psiAn}
\psi_A(n) = \sum_{\substack{d \,|\, n \\ d\leq A}} \frac{n\mu^2(d)}{d}\,.
\end{equation}
Then for every $\varepsilon>0$ and $\eta>0$, there exists $A_0$ such that for $A>A_0$, the number of integers $n<x$ for which $\psi(n)-\psi_A(n)>\eta n$ is less than $\varepsilon x$.
\end{lemma}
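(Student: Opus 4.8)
The plan is to control the \emph{average} of $\psi(n)-\psi_A(n)$ over $n<x$ and then pass to a pointwise statement via a first-moment (Markov) bound; this is the $\psi$-analogue of the corresponding step in Erd\H{o}s's treatment of $\sigma$. Using the divisor-sum representation \eqref{psiform2} together with the definition \eqref{psiAn}, I first record that
\[
\psi(n)-\psi_A(n)=\sum_{\substack{d\,|\,n\\ d>A}}\frac{n\mu^2(d)}{d}=n\sum_{\substack{d\,|\,n\\ d>A}}\frac{\mu^2(d)}{d}\,,
\]
so all summands are nonnegative and the event $\psi(n)-\psi_A(n)>\eta n$ is exactly $\sum_{d\,|\,n,\,d>A}\mu^2(d)/d>\eta$.

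Next I would sum $(\psi(n)-\psi_A(n))/n$ over $n<x$ and interchange the order of summation, grouping by the divisor $d$:
\[
\sum_{n<x}\frac{\psi(n)-\psi_A(n)}{n}=\sum_{d>A}\frac{\mu^2(d)}{d}\cdot\#\{\,n<x:\ d\mid n\,\}\le x\sum_{d>A}\frac{\mu^2(d)}{d^2}\,.
\]
Since $\sum_{d\ge1}\mu^2(d)/d^2=\prod_p\!\left(1+p^{-2}\right)=\zeta(2)/\zeta(4)<\infty$, the tail $\delta(A):=\sum_{d>A}\mu^2(d)/d^2$ tends to $0$ as $A\to\infty$. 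Hence, given $\varepsilon>0$ and $\eta>0$, there is $A_0=A_0(\varepsilon,\eta)$ with $\delta(A)<\varepsilon\eta$ for all $A>A_0$.

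Finally, by Markov's inequality (a first-moment bound), for $A>A_0$ the number of $n<x$ with $\psi(n)-\psi_A(n)>\eta n$ is at most
\[
\frac{1}{\eta}\sum_{n<x}\frac{\psi(n)-\psi_A(n)}{n}\le\frac{x\,\delta(A)}{\eta}<\varepsilon x\,,
\]
which is precisely the assertion of the lemma. The computation is routine; the only point that actually matters is the convergence of $\sum_d\mu^2(d)/d^2$, which is what lets the tail be made small \emph{uniformly in $x$} and thereby fixes $A_0$ independently of $x$. I do not anticipate a genuine obstacle here — the argument is short — beyond being careful that the interchange of summation is legitimate (it is, by nonnegativity) and that the bound $\#\{n<x: d\mid n\}\le x/d$ is used rather than an exact floor, which is harmless.
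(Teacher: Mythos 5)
Your proof is correct and follows essentially the same first-moment strategy as the paper: interchange the order of summation over $n$ and the divisors $d>A$, use the convergence of $\sum_d \mu^2(d)/d^2$ to make the tail small, and conclude that few $n$ can be exceptional. The only (cosmetic) difference is that you normalize by $n$ and apply Markov's inequality directly, whereas the paper bounds the unnormalized sum $\sum_{n\le x}(\psi(n)-\psi_A(n))<x^2/A$ and derives a contradiction by bounding the contribution of $\varepsilon x$ exceptional integers from below by $\eta\sum_{d\le\varepsilon x}d$; your version is, if anything, slightly cleaner.
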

\begin{proof}
Using \eqref{psiform2} and \eqref{psiAn}, we  have
\begin{equation}\label{sumpsipsiA1}
\sum_{n=1}^x \big(\psi(n)-\psi_A(n)\big)=\sum_{n=1}^x\sum_{\substack{d \,|\, n \\ d>A}} \frac{n\mu^2(d)}{d} 
=\sum_{d_1>A}\mu^2(d_1)\sum_{d_2\leq x/d_1}d_2<\sum_{d>A}\frac{x^2}{d^2}<\frac{x^2}{A}\,.
\end{equation}
If Lemma \ref{psiA} were not true, we would have $\psi(n)-\psi_A(n)>\eta n$ for at least $\varepsilon x$ integers $d\le x$. Thus
\begin{equation}\label{sumpsipsiA2}
\sum_{n=1}^x \big(\psi(n)-\psi_A(n)\big)>\eta\sum_{d\leq \varepsilon x}d>\frac{\eta \varepsilon^2 x^2}{4}\,.
\end{equation}
For $A >\frac{4}{\eta\varepsilon^2}$ \eqref{sumpsipsiA2} contradicts \eqref{sumpsipsiA1}, which proves Lemma \ref{psiA}.
\end{proof}

\begin{lemma}\label{primefact}
A primitive $\psi$-abundant number not exceeding $n$, which satisfies the three conditions of Lemma \ref{Erdos1934}, necessarily has a prime divisor between
$(\log n)^{10}$ and $n^{1/(40\nu)}$, provided $n$ is sufficiently large.
\end{lemma}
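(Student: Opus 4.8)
The plan is to argue by contradiction, following Erdős' approach to primitive abundant numbers. Suppose $m\le n$ is a primitive $\psi$-abundant number satisfying conditions $(1)$--$(3)$ of Lemma~\ref{Erdos1934}, but having \emph{no} prime divisor in the open interval $\big((\log n)^{10},\,n^{1/(40\nu)}\big)$; thus every prime $p\mid m$ satisfies $p\le(\log n)^{10}$ or $p\ge n^{1/(40\nu)}$. First I would note the two eventual inequalities $(\log n)^{10}<n^{1/(40\nu)}$ and $n^{1/(20\nu)}>(\log n)^{10}$: both hold once $\log n$ exceeds an absolute constant times $(\log\log n)^{2}$, hence for all large $n$ (recall $\nu=\log\log n$). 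The first says the two classes of primes are disjoint; the second, together with condition $(1)$, forces any prime $p\mid m$ with $p\ge n^{1/(40\nu)}$ to divide $m$ exactly once, since $p^{2}\ge n^{1/(20\nu)}>(\log n)^{10}$ would contradict $(1)$.

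Next I would factor $m=m_{1}m_{2}$, where $m_{2}$ is the product of those prime-power components of $m$ supported on primes $\ge n^{1/(40\nu)}$ — squarefree by the previous step — and $m_{1}=m/m_{2}$ is composed of primes $\le(\log n)^{10}$. Condition $(3)$ provides a prime $P\mid m$ with $P>n^{1/(20\nu)}$, so $P\mid m_{2}$, $m_{2}>1$, and $m_{1}$ is a \emph{proper} divisor of $m$; by primitivity $\psi(m_{1})\le 2m_{1}$. Writing $\omega$ for the number of distinct prime factors, condition $(2)$ gives $\omega(m_{2})\le\omega(m)<10\nu$, and since every prime dividing $m_{2}$ exceeds $n^{1/(40\nu)}$,
\[
1<\frac{\psi(m_{2})}{m_{2}}=\prod_{p\mid m_{2}}\Big(1+\tfrac1p\Big)\le\big(1+n^{-1/(40\nu)}\big)^{10\nu}\le 1+\varepsilon_{n},\qquad \varepsilon_{n}:=20\nu\,n^{-1/(40\nu)},
\]
for $n$ large, because $10\nu\,n^{-1/(40\nu)}=10\nu\,e^{-(\log n)/(40\nu)}\to0$. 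Combining this with $\psi(m)/m>2$ and the multiplicativity of $\psi$ gives $2-2\varepsilon_{n}<\psi(m_{1})/m_{1}\le 2$.

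The crux is then a denominator estimate. Because $\psi(p^{a})/p^{a}=1+1/p$ for every $a\ge1$, one has $\psi(m_{1})/m_{1}=\psi(r_{1})/r_{1}$ with $r_{1}:=\prod_{p\mid m_{1}}p$ the radical of $m_{1}$ and $\psi(r_{1})=\prod_{p\mid r_{1}}(p+1)\in\mathbb Z$. Assuming for the moment that $\psi(m_{1})/m_{1}<2$, the integer $2r_{1}-\psi(r_{1})$ is positive, hence $\ge1$, so $2-\psi(m_{1})/m_{1}\ge 1/r_{1}$; combined with $2-\psi(m_{1})/m_{1}<2\varepsilon_{n}$ this forces $r_{1}>1/(2\varepsilon_{n})=n^{1/(40\nu)}/(40\nu)$. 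But $r_{1}$ is a product of $\omega(m_{1})\le\omega(m)<10\nu$ distinct primes each below $(\log n)^{10}$, so $r_{1}<(\log n)^{100\nu}$. Taking logarithms and using $\log\log n=\nu$ yields $\frac{\log n}{40\nu}<100\nu^{2}+\log(40\nu)$, i.e.\ $\log n=O\big((\log\log n)^{3}\big)$, impossible for large $n$. That contradiction proves the claim.

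The step I expect to be the genuine obstacle is the clause ``$\psi(m_{1})/m_{1}<2$'': the denominator trick collapses precisely when $m_{1}$ is $\psi$-perfect, i.e.\ $\psi(r_{1})=2r_{1}$. An elementary computation shows $\psi(r)=2r$ with $r$ squarefree forces $r=6$; then, applying primitivity to the divisors $6p$ of $m$ with $p\mid m_{2}$ — each of which has $\psi(6p)/(6p)=2(1+1/p)>2$ — one is pushed to $m=6P$ for a single large prime $P$, and such an $m$ really does have no prime factor in the asserted range. So the argument above establishes Lemma~\ref{primefact} for every primitive $\psi$-abundant $m\le n$ outside the family $\{6P:P\ \text{prime}\}$; to cover that sparse family one must either exclude it from the statement or handle those integers separately in the proof of Theorem~\ref{Theorem}.
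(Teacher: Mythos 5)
Your argument is, modulo notation, the same as the paper's: the splitting $m=ab$ into a small-prime part and a large-prime part, the estimate $\psi(b)/b\le\big(1+n^{-1/(40\nu)}\big)^{10\nu}<1+20\nu\,n^{-1/(40\nu)}$, and the integrality step $\psi(a)/a\le 2-1/a$ combined with $a<(\log n)^{100\nu}$ are exactly \eqref{sigmabest1} and \eqref{sigmaaest2}. The one point where you depart from the paper is the one that matters. The paper asserts $\psi(a)/a<2$ in \eqref{sigmaaest1} without justification; primitivity only yields $\psi(a)\le 2a$ (a proper divisor is merely required not to be $\psi$-abundant, and $\psi$-abundance is defined by the strict inequality $\psi(n)>2n$), and equality holds exactly when $a=2^i3^j$. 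Your diagnosis that this is a genuine obstruction rather than a cosmetic one is correct: for any prime $P$ with $n^{1/(20\nu)}<P\le n/6$, the integer $m=6P$ is $\psi$-abundant since $\psi(6P)=12(P+1)>12P$; it is primitive, since each proper divisor $d\in\{1,2,3,6,P,2P,3P\}$ satisfies $\psi(d)\le 2d$; it is squarefree with three prime factors and greatest prime factor exceeding $n^{1/(20\nu)}$, so it meets conditions $(1)$--$(3)$ of Lemma~\ref{Erdos1934}; and it has no prime divisor in $\big((\log n)^{10},\,n^{1/(40\nu)}\big)$. Hence Lemma~\ref{primefact} is false as stated, the paper's own proof fails at \eqref{sigmaaest1}, and your write-up (including the reduction of the exceptional case to $m=6P$ via the fact that $\psi(r)=2r$ for squarefree $r>1$ forces $r=6$, followed by primitivity applied to the divisors $6p$) is the correct resolution of what can actually be proved.

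Two remarks on the repair you leave open. The cleanest fix is the convention Erd\H{o}s uses in the cited 1934 paper for $\sigma$: define $\psi$-abundant by $\psi(n)\ge 2n$. Then $6$ is itself primitive $\psi$-abundant, $6P$ is no longer primitive, and the strict inequality $\psi(a)/a<2$ for proper divisors of primitive $\psi$-abundant numbers becomes legitimate; one must then recheck how Lemma~\ref{primefact} is invoked in the proof of Theorem~\ref{Theorem}, since the primitive divisor of an $a_i$ could now be $6$ itself. If instead one keeps the definition and tries to handle the exceptional family inside Theorem~\ref{Theorem}, note that although $\{6P\}$ is sparse, the set of integers divisible by some $6P$ with $P$ large has positive density, so those $a_i$ cannot simply be discarded on density grounds; a genuine additional argument would be needed there.
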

\begin{proof}
Assume that $m=ab$ is such a primitive $\psi$-abundant number, where all prime factors of $a$ are less than $(\log n)^{10}$
and all prime factors of $b$ are greater than  $n^{1/(40\nu)}$.
We have
\begin{equation}\label{sigmamest1}
\frac{\psi(m)}{m} \ge 2
\end{equation}
and
\begin{equation}\label{sigmaaest1}
\frac{\psi(a)}{a}<2\,.
\end{equation}
Now \eqref{sigmaaest1} and Lemma \ref{Erdos1934} imply
\begin{equation}\label{sigmaaest2}
\frac{\psi(a)}{a}\leq2-\frac{1}{a}<2-\frac{1}{(\log n)^{100\nu}}
\end{equation}
On the other hand by \eqref{psiform2} and Lemma \ref{Erdos1934}, we obtain 
\begin{equation}\label{sigmabest1}
\frac{\psi(b)}{b}=\sum _{{d\,|\,b}}\frac{\mu^2(d)}{d}=\prod_{p\,|\,b}\left(1+\frac{1}{p}\right)<\left(1+\frac{1}{n^{1/(40\nu)}}\right)^{10\nu}<1+\frac{20\nu}{n^{1/(40\nu)}}\,,
\end{equation}
if $n$ is sufficiently large. Now \eqref{sigmaaest2} and \eqref{sigmabest1} yield 
\begin{equation*}
\frac{\psi(m)}{m}=\frac{\psi(a)}{a}\frac{\psi(b)}{b}<2
\end{equation*}
for sufficiently large $n$, which contradicts \eqref{sigmamest1}.
\end{proof}

\section{Proof of Theorem \ref{Theorem}}  
\indent

Denote by $(a_i , b_i)$, $a_i<b_i$, $i = 1, 2, \ldots$ the sequence of pairs of $\psi$-amicable numbers. It is sufficient to prove that the sequence $a_i$, $i = 1, 2, \ldots$ has density 0. 
We split the sequence $a_i$ into two classes. Let $A=A(\varepsilon)$ be sufficiently large. In the first class are the $a_i$ for which there exists a $p\leq A$ with $\psi(a_i) \not\equiv 0 \, \big(p^A\big)$. 
It follows from Lemma \ref{psinot} that the density of the $a_i$ of the first class is 0. For the $a_i$ of the second class $\psi(a_i) \equiv 0 \, \big(p^A\big)$ for every $p\leq A$.
It is easy to see that if $d\leq A$ and $d \,|\, a_i$ then $\psi(a_i)-a_i \equiv 0\, (d)$. Therefore $\psi(a_i)-a_i=b_i \equiv 0 \, (d)$.
From Lemma \ref{psiA} it follows that except for at most $\varepsilon n$ of the $a_i$ not exceeding $n$ we have 
\begin{equation}\label{psiApsieta}
\frac{\psi_A(a_i)}{a_i}\geq\frac{\psi(a_i)}{a_i}-\eta\,.
\end{equation}
By \eqref{psiform2}, \eqref{psiAn}, \eqref{psiApsieta} and the fact that every divisor $d\leq A$ of $a_i$ also divides $b_i$, we get
\begin{equation}\label{psibpsia}
\frac{\psi(b_i)}{b_i}=\sum _{{d\,|\,b_i}}\frac{\mu^2(d)}{d}\geq\sum_{\substack{d \,|\, a_i \\ d\leq A}} \frac{\mu^2(d)}{d}=\frac{\psi_A(a_i)}{a_i}\geq\frac{\psi(a_i)}{a_i}-\eta\,.
\end{equation}
Now \eqref{psi-amicable} and \eqref{psibpsia} lead to
\begin{equation*}
\eta\geq\frac{\psi(a_i)}{a_i}-\frac{\psi(b_i)}{b_i}=\frac{b_i}{a_i}-\frac{a_i}{b_i}\,.
\end{equation*}
Hence
\begin{equation*}
1<\frac{b_i}{a_i}<1+\eta\,.
\end{equation*}
The last inequality and \eqref{psi-amicable} give us
\begin{equation}\label{psia2eta}
2<\frac{\psi(a_i)}{a_i}<2+\eta\,.
\end{equation}
Bearing in mind Lemma \ref{Erdos1934}, we may assume that each $a_i$ from \eqref{psia2eta} has a primitive $\psi$-abundant divisor satisfying all of the three conditions of Lemma \ref{Erdos1934}.
Let $a_1, a_2, \ldots, a_k$ denote all distinct numbers from \eqref{psia2eta} such that $a_i\leq n$.
According to Lemma \ref{primefact}, each $a_i$ has a prime factor $p_i$ between $(\log n)^{10}$ and $n^{1/(40\nu)}$. Thus $a_i=p_ic_i$, where $c_i<n/(\log n)^{10}$.
Suppose that $c_i=c_j$ for some $i\neq j$. Then $p_i\neq p_j$. We have
\begin{equation*}
\frac{\psi(a_i)}{a_i}=\frac{\psi(p_i)\psi(c_i)}{a_i}=\frac{\psi(c_i)}{c_i}\frac{p_i+1}{p_i}
\end{equation*}
and
\begin{equation*}
\frac{\psi(a_j)}{a_j}=\frac{\psi(p_j)\psi(c_j)}{a_j}=\frac{\psi(c_j)}{c_j}\frac{p_j+1}{p_j}\,,
\end{equation*}
which together imply
\begin{equation}\label{psiij1}
\frac{\psi(a_i)}{a_i}\frac{a_j}{\psi(a_j)}=\frac{p_j(p_i+1)}{p_i(p_j+1)}\,.
\end{equation}
Without loss of generality, assume that $p_j>p_i$. Now \eqref{psiij1} yields  
\begin{equation}\label{psiij2}
\frac{\psi(a_i)}{a_i}\frac{a_j}{\psi(a_j)}>1+\frac{1}{p_i(p_j+1)}>\frac{1}{n^{1/(20\nu)}}
\end{equation}
On the other hand, from \eqref{psia2eta} it follows that
\begin{equation*}
\frac{\psi(a_i)}{a_i}\frac{a_j}{\psi(a_j)}<1+\frac{\eta}{2}\,,
\end{equation*}
which contradicts \eqref{psiij2} for $\eta$ sufficiently small.
Consequently, $c_i\neq c_j$ for $i\neq j$, which means that the number of $a_i\leq n$ from \eqref{psia2eta} is equal to the number of $c_i$, which is $o(n)$.
This completes the proof of Theorem \ref{Theorem}.

\section{{\boldmath$\psi$}-amicable {\boldmath$k$}-tuples}\label{ktuples}  
\indent

Dickson \cite{Dickson} and Mason \cite{Mason} introduced a definition of amicable $k$-tuples using the sum-of-divisors function $\sigma$. 
We now provide an analogous definition based on the function $\psi$. We say that the natural numbers $n_1,\ldots, n_k$ form an $\psi$-amicable $k$-tuple if
\begin{equation*}
\psi(n_1)=\psi(n_2)=\cdots=\psi(n_k)=n_1+n_2+\cdots+n_k\,.
\end{equation*}
When $n_1<n_2<\cdots<n_k$, we have that
\begin{equation*}
kn_1<\psi(n_j)<kn_k
\end{equation*}
for each $j\in[1, k]$, which means that $n_1$ is $k$-$\psi$-abundant. 
The next theorem will help us search for $\psi$-amicable $k$-tuples.
\begin{theorem}
Suppose the natural numbers $N_1,\ldots, N_k$ and $a$ satisfy 
\begin{equation*}
(a, N_1)=\cdots=(a, N_k)= 1 
\end{equation*}
and
\begin{equation*}
\frac{\psi(a)}{a}=\frac{N_1+\cdots+N_k}{\psi(N_1)}=\cdots=\frac{N_1+\cdots+N_k}{\psi(N_k)}\,.
\end{equation*}
Then $aN_1,\ldots, aN_k$ are an $\psi$-amicable $k$-tuple.
\end{theorem}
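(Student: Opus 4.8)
The plan is to verify the two defining equations of a $\psi$-amicable $k$-tuple directly, exploiting multiplicativity of $\psi$. The key observation is that since $(a,N_j)=1$ for each $j$, we have $\psi(aN_j)=\psi(a)\psi(N_j)$. So the proof reduces to showing (i) $\psi(aN_1)=\cdots=\psi(aN_k)$ and (ii) this common value equals $aN_1+\cdots+aN_k$.

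For step (i): from the hypothesis $\dfrac{N_1+\cdots+N_k}{\psi(N_i)}=\dfrac{N_1+\cdots+N_k}{\psi(N_j)}$ we immediately get $\psi(N_i)=\psi(N_j)$ for all $i,j$ (the numerator $N_1+\cdots+N_k$ is a fixed positive integer and can be cancelled). Multiplying through by $\psi(a)$ and using multiplicativity gives $\psi(aN_i)=\psi(a)\psi(N_i)=\psi(a)\psi(N_j)=\psi(aN_j)$, which is (i).

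For step (ii): write $S=N_1+\cdots+N_k$. The hypothesis $\dfrac{\psi(a)}{a}=\dfrac{S}{\psi(N_j)}$ rearranges to $\psi(a)\psi(N_j)=aS$ for each $j$. Hence, using multiplicativity again, $\psi(aN_j)=\psi(a)\psi(N_j)=aS=a(N_1+\cdots+N_k)=aN_1+\cdots+aN_k$. Since this holds for every $j$ and the left-hand sides agree by (i), all the quantities $\psi(aN_1),\ldots,\psi(aN_k)$ and $aN_1+\cdots+aN_k$ coincide, which is exactly the definition of $aN_1,\ldots,aN_k$ being a $\psi$-amicable $k$-tuple.

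There is essentially no obstacle here: the statement is a formal consequence of the multiplicativity of $\psi$ on coprime arguments together with the given proportionality relations, and the only thing to be careful about is that the coprimality hypothesis $(a,N_j)=1$ is genuinely used (and is what makes $\psi(aN_j)$ factor). One should also note in passing that the $aN_j$ need not be distinct in general, but if the $N_j$ are distinct then so are the $aN_j$, so the construction does produce a genuine $k$-tuple in the intended sense.
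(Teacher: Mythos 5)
Your proof is correct and is exactly the argument the paper intends: the paper's own proof consists of the single sentence that the result ``follows directly from the multiplicativity of $\psi$,'' and your write-up simply fills in the routine details of that same computation ($\psi(aN_j)=\psi(a)\psi(N_j)=aS$). No further comment is needed.
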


\begin{proof}
This follows  directly from the multiplicativity of $\psi$.
\end{proof}
Several $\psi$-amicable triples are listed in the table below.
\begin{center}
\begin{tabular}[t]{|p{34em}|}
\hline \hspace{54mm}$\psi$-amicable triples \\
\hline
$(79170, 80850, 81900)$, $(150150, 158340, 175350)$, $(158340, 161700, 163800)$, \\
$(237510, 242550, 245700)$, $(300300, 316680, 350700)$, $(316680, 323400, 327600)$, \\
$(395850, 404250, 409500)$, $(450450, 474810, 526260)$, $(450450, 475020, 526050)$, \\
$(468930, 483210, 499380)$, $(474810, 485940, 490770)$, $(475020, 485100, 491400)$,\\
$(554190, 565950, 573300)$, $(570570, 662340, 702450)$, $(600600, 633360, 701400)$, \\  
$(622440, 641550, 671370)$, $(633360, 646800, 655200)$, $(641550, 646800, 647010)$, \\ 
$(644280, 644280, 646800)$, $(696150, 696150, 784980)$, $(712530, 727650, 737100)$\\
\hline
\end{tabular}
\captionof{table}{}\label{Table1}
\end{center}
The OEIS \cite{Sloane} sequences \seqnum{A385852}, \seqnum{A386901} and \seqnum{A386933} consist of the first, second and third components of $\psi$-amicable triples, respectively.

\section{{Another definition of \boldmath$\psi$}-amicable {\boldmath$k$}-tuples}\label{Aktuples}  
\indent

The following definition is analogous to that given by Yanney \cite{Yanney}, formulated for $\sigma$-function.
We say that the natural numbers $n_1,\ldots, n_k$ form a $\psi$-amicable $k$-tuple if
\begin{equation*}
\psi(n_1)=\psi(n_2)=\cdots=\psi(n_k)=\frac{1}{k-1}(n_1+n_2+\cdots+n_k)\,.
\end{equation*}
When $k=3$, we have 
\begin{equation*}
\left|\begin{array}{ccc}
n_1=s_\psi(n_2)+s_\psi(n_3)\\
n_2=s_\psi(n_1)+s_\psi(n_3)\\
n_3=s_\psi(n_1)+s_\psi(n_2)
\end{array}\right.\,.
\end{equation*}
Several $\psi$-amicable triples are listed in the table below.
\begin{center}
\begin{tabular}[t]{|p{36em}|}
\hline \hspace{60mm}$\psi$-amicable triples \\
\hline
$(2, 2, 2)$, $(4, 4, 4)$, $(6, 9, 9)$, $(8, 8, 8)$, $(16, 16, 16)$, $(18, 27, 27)$, $(28, 33, 35)$, $(32, 32, 32)$, \\
$(64, 64, 64)$, $(54, 81, 81)$, $(70, 99, 119)$, $(105, 124, 155)$, $(128, 128, 128)$, $(110, 135, 187)$, \\
$(165, 176, 235)$, $(150, 275, 295)$, $(200, 225, 295)$, $(182, 245, 245)$, $(162, 243, 243)$, \\
$(256, 256, 256)$, $(238, 255, 371)$, $(240, 385, 527)$, $(280, 345, 527)$, $(310, 315, 527)$,\\
$(310, 345, 497)$, $(315, 320, 517)$, $(315, 320, 517)$, $(382, 385, 385)$, $(364, 441, 539)$,\\  
$(512, 512, 512)$, $(512, 512, 512)$, $(468, 715, 833)$, $(520, 663, 833)$, $(585, 598, 833)$,\\ 
 $(644, 705, 955)$, $(590, 675, 895)$, $(486, 729, 729)$, $(795, 862, 935)$, (800, 885, 1195)\\
\hline
\end{tabular}
\captionof{table}{}\label{Table2}
\end{center}
The OEIS \cite{Sloane} sequence \seqnum{A387291} consists of the first elements of $\psi$-amicable triples.
When $k=4$, we have 
\begin{equation*}
\left|\begin{array}{cccc}
n_1=s_\psi(n_2)+s_\psi(n_3)+s_\psi(n_4)\\
n_2=s_\psi(n_1)+s_\psi(n_3)+s_\psi(n_4)\\
n_3=s_\psi(n_1)+s_\psi(n_2)+s_\psi(n_4)\\
n_4=s_\psi(n_1)+s_\psi(n_2)+s_\psi(n_3)
\end{array}\right.\,.
\end{equation*}
Several $\psi$-amicable quadruples are listed in the table below.
\begin{center}
\begin{tabular}[t]{|p{37.5em}|}
\hline \hspace{56mm}$\psi$-amicable quadruples  \\
\hline
$(3, 3, 3, 3)$, $(4, 4, 5, 5)$, $(6, 8, 11, 11)$, $(8, 8, 9, 11)$, $(9, 9, 9, 9)$, $(12, 14, 23, 23)$, \\
$(30, 44, 71, 71)$, $(44, 46, 55, 71)$, $(45, 45, 55, 71)$, $(51, 55, 55, 55)$, $(68, 68, 81, 107)$, \\
$(81, 81, 81, 81)$, $(99, 99, 115, 119)$, $(75, 95, 95, 95)$, $(96, 128, 161, 191)$, $(105, 155, 155, 161)$, \\
$(112, 112, 161, 191)$, $(100, 116, 145, 179)$, $(114, 158, 209, 239)$, $(152, 152, 177, 239)$, \\
$(152, 158, 171, 239)$, $(171, 171, 175, 203)$, $(188, 188, 235, 253)$, $(164, 166, 205, 221)$, \\
$(190, 236, 295, 359)$, $(225, 261, 275, 319)$, $(243, 243, 243, 243)$, $(186, 254, 329, 383)$, \\
 $(204, 230, 431, 431)$, $(230, 284, 391, 391)$, $(238, 272, 355, 431)$, $(255, 255, 355, 431)$\\
\hline
\end{tabular}
\captionof{table}{}\label{Table3}
\end{center}
The OEIS \cite{Sloane} sequence \seqnum{A387292} lists the first components of $\psi$-amicable quadruples.
When $k=5$, we have 
\begin{equation*}
\left|\begin{array}{cccc}
n_1=s_\psi(n_2)+s_\psi(n_3)+s_\psi(n_4)+s_\psi(n_5)\\
n_2=s_\psi(n_1)+s_\psi(n_3)+s_\psi(n_4)+s_\psi(n_5)\\
n_3=s_\psi(n_1)+s_\psi(n_2)+s_\psi(n_4)+s_\psi(n_5)\\
n_4=s_\psi(n_1)+s_\psi(n_2)+s_\psi(n_3)+s_\psi(n_5)\\
n_5=s_\psi(n_1)+s_\psi(n_2)+s_\psi(n_3)+s_\psi(n_4)
\end{array}\right.\,.
\end{equation*}
Several $\psi$-amicable quintuples are listed in the table below.
\begin{center}
\begin{tabular}[t]{|p{35em}|}
\hline \hspace{52mm}$\psi$-amicable quintuples \\
\hline
$(4, 5, 5, 5, 5)$, $(6, 9, 11, 11, 11)$, $(8, 9, 9, 11, 11)$, $(12, 15, 23, 23, 23)$, \\
$(28, 35, 35, 47, 47)$, $(32, 33, 33, 47, 47)$, $(30, 45, 71, 71, 71)$, $(36, 55, 55, 71, 71)$,\\ 
$(40, 51, 55, 71, 71)$, $(44, 51, 51, 71, 71)$, $(45, 46, 55, 71, 71)$, $(78, 117, 143, 167, 167)$,\\ 
$(98, 117, 123, 167, 167)$, $(104, 117, 117, 167, 167)$, $(84, 141, 161, 191, 191)$, \\
$(112, 155, 155, 155, 191)$, $(124, 161, 161, 161, 161)$, $(158, 175, 209, 209, 209)$,\\
$(158, 177, 177, 209, 239)$, $(140, 253, 253, 253, 253)$, $(176, 235, 235, 253, 253)$, \\
$(174, 225, 323, 359, 359)$, $(174, 261, 323, 323, 359)$, $(200, 261, 261, 359, 359)$\\
\hline
\end{tabular}
\captionof{table}{}\label{Table4}
\end{center}
The OEIS \cite{Sloane} sequence \seqnum{A387486} consists of the first elements of $\psi$-amicable quintuples.
When $k=6$, we have 
\begin{equation*}
\left|\begin{array}{cccc}
n_1=s_\psi(n_2)+s_\psi(n_3)+s_\psi(n_4)+s_\psi(n_5)+s_\psi(n_6)\\
n_2=s_\psi(n_1)+s_\psi(n_3)+s_\psi(n_4)+s_\psi(n_5)+s_\psi(n_6)\\
n_3=s_\psi(n_1)+s_\psi(n_2)+s_\psi(n_4)+s_\psi(n_5)+s_\psi(n_6)\\
n_4=s_\psi(n_1)+s_\psi(n_2)+s_\psi(n_3)+s_\psi(n_5)+s_\psi(n_6)\\
n_5=s_\psi(n_1)+s_\psi(n_2)+s_\psi(n_3)+s_\psi(n_4)+s_\psi(n_6)\\
n_6=s_\psi(n_1)+s_\psi(n_2)+s_\psi(n_3)+s_\psi(n_4)+s_\psi(n_5)
\end{array}\right.\,.
\end{equation*}
Several $\psi$-amicable sextuples are listed in the table below.
\vspace{-5mm}
\begin{center}
\begin{tabular}[t]{|p{38em}|}
\hline \hspace{60mm}$\psi$-amicable sextuples \\
\hline
$(5, 5, 5, 5, 5, 5)$, $(8, 8, 11, 11, 11, 11)$, $(9, 9, 9, 11, 11, 11)$, $(12, 16, 23, 23, 23, 23)$\\
$(14, 14, 23, 23, 23, 23)$, $(24, 28, 47, 47, 47, 47)$, $(25, 25, 25, 25, 25, 25)$, $(32, 32, 35, 47, 47, 47)$,  \\
$(30, 46, 71, 71, 71, 71)$, $(33, 33, 33, 47, 47, 47)$, $(36, 40, 71, 71, 71, 71)$, $(45, 51, 51, 71, 71, 71)$, \\
$(46, 46, 55, 71, 71, 71)$, $(98, 98, 143, 167, 167, 167)$, $(117, 123, 123, 143, 167, 167)$, \\
$(84, 112, 191, 191, 191, 191)$, $(105, 141, 141, 191, 191, 191)$, $(128, 128, 161, 161, 191, 191)$, \\
$(141, 141, 141, 155, 191, 191)$, $(155, 161, 161, 161, 161, 161)$, $(152, 152, 209, 209, 239, 239)$, \\
$(152, 158, 203, 209, 239, 239)$, $(158, 158, 203, 203, 239, 239)$, $(171, 171, 171, 209, 239, 239)$, \\
\hline
\end{tabular}
\captionof{table}{}\label{Table5}
\end{center}
The OEIS \cite{Sloane} sequence \seqnum{A387636} lists the first components of $\psi$-amicable sextuples.

\vskip20pt
\footnotesize
\begin{flushleft}
S. I. Dimitrov\\
\quad\\
Faculty of Applied Mathematics and Informatics\\
Technical University of Sofia \\
Blvd. St.Kliment Ohridski 8 \\
Sofia 1000, Bulgaria\\
e-mail: sdimitrov@tu-sofia.bg\\
\end{flushleft}

\begin{flushleft}
Department of Bioinformatics and Mathematical Modelling\\
Institute of Biophysics and Biomedical Engineering\\
Bulgarian Academy of Sciences\\
Acad. G. Bonchev Str. Bl. 105, Sofia 1113, Bulgaria \\
e-mail: xyzstoyan@gmail.com\\
\end{flushleft}

\end{document}